\DeclareMathOperator{\Ker}{Ker}
\DeclareMathOperator{\diag}{diag}
\renewcommand{\le}{\leqslant}
\renewcommand{\ge}{\geqslant}
\newtheorem{theorem}{Theorem}
\newtheorem*{lemma}{Lemma}
\newtheorem*{corollary}{Corollary of Theorems \ref{the}
and  \ref{jtt}}
\theoremstyle{definition}
\newtheorem*{definition}{Definition}
\begin{document}
\title{Topological classification of chains of
linear mappings}

\author[ryb]{Tetiana Rybalkina}
\ead{rybalkina\_t@ukr.net}

\author[ryb]{Vladimir V. Sergeichuk\corref{cor}}
\ead{sergeich@imath.kiev.ua}
\address[ryb]{Institute of Mathematics, Tereshchenkivska 3,
Kiev, Ukraine}
\cortext[cor]{Corresponding author}

\begin{abstract}
We consider systems of linear mappings
${A}_1,\dots,{A}_{t-1}$ of the form
\[{\cal A}:\quad
\xymatrix{
U_1\ar@{-}^{{A}_1}[r]
 &
U_2 \ar@{-}^{{A}_2}[r]
 &
U_3 \ar@{-}^{{A}_3}[r]
 &
\cdots \ar@{-}^{{A}_{t-1}}[r]
  &
U_{t}
}
\]
in which $U_1,\dots,U_t$ are unitary
(or Euclidean) spaces and each line is
either the arrow $\longrightarrow$ or
the arrow $\longleftarrow$. Let ${\cal
A}$ be transformed to
\[{\cal B}:\quad
\xymatrix{
V_1\ar@{-}^{{B}_1}[r]
 &
V_2 \ar@{-}^{{B}_2}[r]
 &
V_3 \ar@{-}^{{B}_3}[r]
 &
\cdots \ar@{-}^{{B}_{t-1}}[r]
  &
V_{t}
}
\]
by a system $\{\varphi_i:U_i\to
V_i\}_{i=1}^t$ of bijections. We say
that ${\cal A}$ and ${\cal B}$ are
linearly isomorphic if all $\varphi_i$
are linear. Considering all $U_i$ and
$V_i$ as metric spaces, we say that
${\cal A}$ and ${\cal B}$ are
topologically isomorphic if all
$\varphi_i$ and $\varphi_i^{-1}$ are
continuous.

We prove that ${\cal A}$ and ${\cal B}$
are topologically isomorphic if and
only if they are linearly isomorphic.
\end{abstract}

\begin{keyword}
Chains of linear mappings\sep
Topological equivalence

\MSC 5A21; 37C15
\end{keyword}

\maketitle

\section{Introduction and theorem}
A \emph{chain of linear mappings} is a
system of linear mappings
${{A}_1},\dots,{{A}_{t-1}}$ of the form
\begin{equation}\label{dfy}
{\cal A}:\quad
\xymatrix{
U_1\ar@{-}^{{A}_1}[r]
 &
U_2 \ar@{-}^{{A}_2}[r]
 &
U_3 \ar@{-}^{{A}_3}[r]
 &
\cdots \ar@{-}^{{A}_{t-1}}[r]
  &
U_{t}
}
\end{equation}
in which each line is either the arrow
$\longrightarrow$ or the arrow
$\longleftarrow$. We assume that
$U_1,\dots,U_t$ are unitary spaces (or
are Euclidean spaces). Without loss of generality, the reader may think that all $U_1,\dots,U_t$ are $\mathbb C\oplus \dots\oplus \mathbb C$ (or $\mathbb R\oplus \dots\oplus \mathbb R$, respectively) with a natural topology on them.

Let
\begin{equation}\label{dfy1}
{\cal B}:\quad
\xymatrix{
V_1\ar@{-}^{{B}_1}[r]
 &
V_2 \ar@{-}^{{B}_2}[r]
 &
V_3 \ar@{-}^{{B}_3}[r]
 &
\cdots \ar@{-}^{{B}_{t-1}}[r]
  &
V_{t}
}\end{equation}
be a chain with the same orientation of arrows as in \eqref{dfy}. We write $\varphi:\cal
A\xrightarrow{\sim}{\cal B}$ if $\varphi=\{\varphi_i:U_i\to
V_i\}_{i=1}^t$ is a system of bijections such that all
squares in the diagram
    \begin{equation*}\label{rwu}
\xymatrix{
U_1\ar@{-}^{{A}_1}[r]\ar[d]_{\varphi_1}
 &
U_2 \ar@{-}^{{A}_2}[r]\ar[d]_{\varphi_2}
 &
U_3 \ar@{-}^{{A}_3}[r]\ar[d]_{\varphi_3}
 &
\cdots \ar@{-}^{{A}_{t-2}}[r]
  &
U_{t-1} \ar@{-}^{{A}_{t-1}}[r]
\ar[d]_{\varphi_{t-1}}
  &
U_t\ar[d]_{\varphi_t}
           \\
V_1\ar@{-}^{{B}_1}[r]
 &
V_2 \ar@{-}^{{B}_2}[r]
 &
V_3 \ar@{-}^{{B}_3}[r]
 &
\cdots \ar@{-}^{{B}_{t-2}}[r]
  &
V_{t-1} \ar@{-}^{{B}_{t-1}}[r]
  &
V_t
 }
\end{equation*}
are commutative; that is,
\begin{align*}
\varphi_{i+1}{A}_i={B}_i\varphi_i\quad
\text{if }{A}_i:U_i\to U_{i+1}\\
\varphi_{i}{A}_i= {B}_i\varphi_{i+1}\quad
\text{if }
{A}_i:U_i\leftarrow U_{i+1}
\end{align*}
for each $i=1,\dots,t-1$.

\begin{definition}\label{def}
We say that $\varphi:\cal
A\xrightarrow{\sim}{\cal B}$ is \begin{itemize}
  \item[\rm(i)] an \emph{isometry}
      if each $\varphi_i:U_i\to
      V_i$ is a linear bijection
      that preserves the scalar
product; that is, each $\varphi_i$
is a unitary map (or an orthogonal
map if all spaces are Euclidean);

  \item[\rm(ii)] a \emph{linear
      isomorphism} if each
      $\varphi_i:U_i\to V_i$ is a
      linear bijection (in this
      definition, we forget that
      $U_i$ and $V_i$ are metric
      spaces and consider them as
      linear spaces);

  \item[\rm(iii)] a
      \emph{topological
      isomorphism} if each
      $\varphi_i:U_i\to V_i$ is a
      homeomorphism, which means
that $\varphi_i$ and
$\varphi_i^{-1}$ are continuous and
bijective (we forget that $U_i$ and
$V_i$ are
      linear spaces and consider
      them as metric spaces).
\end{itemize}
\end{definition}

Each linear bijection of unitary (or
Euclidean) spaces is a homeomorphism,
hence
\begin{align*}
  &\text{$\varphi:\cal
A\xrightarrow{\sim}{\cal B}$ is
an
isometry}\\&\qquad\qquad\Longrightarrow\text{$\varphi$
is a linear
isomorphism}\\&\qquad\qquad\qquad\qquad\Longrightarrow\text{$\varphi$
is a topological isomorphism.}
\end{align*}

The main result of this paper is the
following theorem, which is proved in
Section \ref{s4}.

\begin{theorem} \label{the}
Two chains of linear mappings on
unitary (or Euclidean) spaces are
topologically isomorphic if and only if
they are linearly isomorphic.
\end{theorem}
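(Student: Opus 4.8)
The plan is to prove the nontrivial implication, that a topological isomorphism forces a linear one; the converse is already recorded in the excerpt, since a linear bijection of finite-dimensional unitary (or Euclidean) spaces is automatically a homeomorphism. So I would assume $\varphi:\mathcal A\xrightarrow{\sim}\mathcal B$ is a topological isomorphism and aim to produce a linear one.

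First I would pass to canonical form under linear isomorphism. A chain \eqref{dfy} is exactly a representation of the quiver of type $A_t$ whose arrows are oriented as the lines of \eqref{dfy}. By the Krull--Schmidt theorem together with Gabriel's description of the indecomposable representations of $A_t$, every chain decomposes, uniquely up to linear isomorphism, into a direct sum of \emph{interval chains} $\mathcal I_{pq}$ (for $1\le p\le q\le t$), where $\mathcal I_{pq}$ has a one-dimensional space at each vertex $i$ with $p\le i\le q$, the zero space elsewhere, and the structure map equal to the identity on every line joining two vertices of the interval $[p,q]$. Thus the linear isomorphism class of $\mathcal A$ is recorded precisely by the multiplicities $m_{pq}$ of the $\mathcal I_{pq}$, and that of $\mathcal B$ by multiplicities $n_{pq}$. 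It therefore suffices to show that each $m_{pq}$ is a topological invariant, i.e. that the topological isomorphism forces $m_{pq}=n_{pq}$ for all $p\le q$.

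The heart of the argument is a transport principle. Writing each structure map as an honest linear map (a rightward line is $A_i\colon U_i\to U_{i+1}$, a leftward line is $A_i\colon U_{i+1}\to U_i$), I would consider compositions $C=A_{j}\cdots A_{i}$ along any maximal coherently oriented run, together with the corresponding $D$ in $\mathcal B$. The commutativity relations give $\psi\,C=D\,\psi$, where $\psi$ is the component of $\varphi$ at the terminal vertex of $C$ and the initial one enters through the source; hence $\psi(\operatorname{Im}C)=\operatorname{Im}D$ \emph{exactly} (no reference to the origin is needed, since $\psi$ is onto), while $\psi(\Ker C)=D^{-1}(\psi(0))$ is a coset of $\Ker D$, of the same dimension as $\Ker D$. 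Because each $\varphi_i$ is a bijection it preserves finite intersections of such subspaces, and because it is a homeomorphism it preserves their topological dimension; since translation does not change the dimension of an affine subspace, it follows that for every finite family of images and kernels of coherent composites lying in a common $U_i$, the dimension of their intersection equals that of the corresponding intersection in $\mathcal B$. In particular all ranks $\operatorname{rank}C$, all $\dim U_i$, and all such intersection-dimensions are topological invariants.

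It remains to express the $m_{pq}$ through these invariants, which is the purely linear bookkeeping of the interval decomposition: in canonical form $\operatorname{Im}C$ and $\Ker C$ are coordinate subspaces spanned by exactly those interval summands on which $C$ is nonzero, respectively zero, so the dimension of any intersection of such subspaces is a sum of certain $m_{pq}$, and a short inclusion--exclusion recovers each $m_{pq}$ individually. For instance, for $t=3$ with orientation $\rightarrow\leftarrow$ one gets $m_{13}=\dim(\operatorname{Im}A_1\cap\operatorname{Im}A_2)$, while for $\leftarrow\rightarrow$ one gets $m_{13}=\dim U_2-\dim\Ker A_1-\dim\Ker A_2+\dim(\Ker A_1\cap\Ker A_2)$, and the general case is assembled the same way at each birth and death of an interval. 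Combining this with the transport principle yields $m_{pq}=n_{pq}$ and hence a linear isomorphism $\mathcal A\xrightarrow{\sim}\mathcal B$. I expect the main obstacle to be precisely the nonlinearity of the $\varphi_i$: one cannot compare $A_i$ with $B_i$ directly, and, since a homeomorphism need not fix the origin, kernels are carried not to kernels but only to affine cosets of them. The decisive point rescuing the argument is that topological dimension is insensitive both to this affine shift and to the nonlinearity of the $\varphi_i$, so the intersection-dimension data, which is already complete for the linear classification, survives transport through a merely topological isomorphism.
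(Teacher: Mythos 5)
Your overall strategy is the right one and is essentially the paper's: reduce to Gabriel's interval decomposition, exhibit a complete system of \emph{dimension} invariants for linear isomorphism, and show these dimensions survive a topological isomorphism via invariance of topological dimension (the paper cites Hurewicz--Wallman for this). Your handling of the affine shift --- that a homeomorphism need not fix the origin, so kernels are carried only to cosets of kernels, which is harmless for dimension counts --- is a correct and welcome observation. The gap is in the choice of invariants. You only transport images and kernels of composites $C=A_j\cdots A_i$ along \emph{coherently oriented} runs, plus intersections of such subspaces at a common vertex. For $t\ge 4$ with a non-coherent orientation this family of dimensions does \emph{not} determine the multiplicities $m_{pq}$, so the final inclusion--exclusion step cannot be carried out. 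Concretely, take $t=4$ with orientation $U_1\to U_2\leftarrow U_3\to U_4$ and compare
\[
{\cal A}={\cal L}_{13}\oplus{\cal L}_{24}
\qquad\text{and}\qquad
{\cal A}'={\cal L}_{14}\oplus{\cal L}_{23}.
\]
Here no two consecutive arrows compose, so the only coherent composites are $A_1,A_2,A_3$ themselves. One checks that every one of your invariants agrees: $\dim U_i=(1,2,2,1)$ in both, $\dim\Ker A_1=0$, $\dim\operatorname{Im}A_1=1$, $\dim\operatorname{Im}A_2=2$, $\dim(\operatorname{Im}A_1\cap\operatorname{Im}A_2)=1$, $\dim\Ker A_2=0$, $\dim\Ker A_3=1$, $\dim(\Ker A_2\cap\Ker A_3)=0$, $\dim\operatorname{Im}A_3=1$ in both. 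Yet ${\cal A}\not\simeq{\cal A}'$ by Krull--Schmidt. In the language of your linear system: you recover only the three sums $m_{13}+m_{14}$, $m_{13}+m_{23}$, $m_{14}+m_{24}$, which leaves a one-parameter family of solutions.

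What is missing is the \emph{zigzag} information: to separate $m_{13}$ from $m_{14}$ one needs a subspace of $U_4$ such as $A_3A_2^{-1}(\operatorname{Im}A_1)$, which has dimension $0$ for ${\cal A}$ and $1$ for ${\cal A}'$, and which cannot be written as an intersection of images and kernels of coherent composites because it involves a preimage of a proper subspace under a wrong-way arrow. This is exactly what the paper's subspaces $U_{ij}$ of \eqref{ryi}--\eqref{kjy} encode: they are built by alternately pushing forward along forward arrows and pulling back along backward arrows across the whole chain, and Theorem \ref{ljtt} shows their dimensions form a complete linear invariant. Transporting \emph{these} subspaces through $\varphi$ then genuinely requires the inductive argument of Lemma \ref{kiy} (one cannot conjugate a single composite), with the same coset caveat you already noticed. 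So your proposal needs to be repaired by replacing your family of invariants with the nested zigzag subspaces and proving their topological invariance vertex by vertex; as written, the argument does not close.
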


Note that the problem of topological
classification was also studied for
linear operators \cite{Capp-conexamp,
Capp-2th-nas-n<=6, Capp-big-n<6,
Pardon,Kuip-Robb,bud1} (Budnitska is the maiden name of the first author), affine
operators
\cite{Blanc,Ephr,bud,bud1,bud+bud},
dynamical systems \cite{Robb}, and
representations of Lie groups
\cite{Schultz}.

The paper is organized as follows. In  Section \ref{s2} we show that the problem of classifying chains
\eqref{dfy} up to isometry is hopeless for each $t \ge 3$.
In  Section \ref{s3} we recall a known classification of chains
\eqref{dfy} up to linear isomorphism; we formulate it in terms of dimensions of some subspaces. In  Section \ref{s4} we show that these dimensions are also topological invariants, which proves Theorem \ref{the}.

\section{Isometry of chains}\label{s2}

In this section, we consider chains
\eqref{dfy} of linear mappings on
\emph{unitary} spaces. It would be the
most natural to classify them up to
isometry.

If $t=2$, then the classification of
chains \eqref{dfy} up to isometry is
given by the singular value
decomposition: there exist orthonormal
bases in $U_1$ and $U_2$ in which the
matrix of ${A}_1$ is $\diag(a_1,\dots,
a_r)\oplus 0$, where $a_1\ge\cdots\ge
a_r>0$ are real numbers that are
uniquely determined by ${A}_1$.

Unfortunately, \emph{the problem of
classifying chains up to isometry must
be considered as hopeless for $t=3$
(and so for each $t \ge 3$)} since it
contains the problem of classifying
linear operators on unitary
spaces up to unitary similarity, and hence all
systems of linear mappings on unitary
spaces (see the end of this section). This statement is proved sketchy in
\cite[Section 2.3]{ser-uni}; for the
reader convenience we prove in detail the following
weaker assertion.

\begin{theorem} \label{thee}
The problem of classifying chains
\begin{equation}\label{o5ds}
U_1\stackrel{{A}_1}
{\longrightarrow}U_2 \stackrel{{A}_2}
{\longleftarrow}U_3,\qquad\qquad U_1,U_2,U_3
\text{ are unitary spaces,}
\end{equation}
up to isometry contains the problem of
classifying linear operators on unitary
spaces up to unitary similarity.
\end{theorem}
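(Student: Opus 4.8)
The plan is to exhibit an explicit reduction that sends each operator to a chain of the form \eqref{o5ds} and turns unitary similarity into isometry. Given a linear operator $X\colon W\to W$ on a unitary space $W$, I would split it into its Hermitian and skew-Hermitian parts, $H=\tfrac12(X+X^*)$ and $K=\tfrac1{2i}(X-X^*)$, so that $X=H+iK$ with $H,K$ self-adjoint, and then fix a real $c>0$ large enough that $P_1:=H+cI$ and $P_2:=K+cI$ are positive definite. Taking the unique positive definite square roots, I set $A_1:=P_1^{1/2}$ and $A_2:=P_2^{1/2}$ and associate to $X$ the chain $\mathcal A(X)\colon W\xrightarrow{A_1}W\xleftarrow{A_2}W$. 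Doing the same for a second operator $X'$ on $W'$ gives $\mathcal B(X')\colon W'\xrightarrow{B_1}W'\xleftarrow{B_2}W'$ with $B_1=(H'+cI)^{1/2}$ and $B_2=(K'+cI)^{1/2}$. The assertion to prove is then that $X$ and $X'$ are unitarily similar if and only if $\mathcal A(X)$ and $\mathcal B(X')$ are isometric in the sense of Definition \ref{def}(i).

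The key observation driving both implications is that, for a map $A_1\colon U_1\to U_2$, the positive operator $A_1A_1^*$ on $U_2$ is unchanged if $A_1$ is replaced by $A_1\varphi_1^{-1}$ with $\varphi_1$ unitary, because $\varphi_1^{-1}\varphi_1^{-*}=I$. Hence if $\varphi=\{\varphi_i\}$ is an isometry $\mathcal A(X)\xrightarrow{\sim}\mathcal B(X')$, the relations $\varphi_2A_1=B_1\varphi_1$ and $\varphi_2A_2=B_2\varphi_3$ give, after multiplying each by its own adjoint,
\[
\varphi_2P_1\varphi_2^*=B_1B_1^*=P_1',\qquad
\varphi_2P_2\varphi_2^*=B_2B_2^*=P_2',
\]
so that the unwanted end-isomorphisms $\varphi_1,\varphi_3$ are absorbed and a single unitary $\varphi_2$ conjugates $P_1,P_2$ onto $P_1',P_2'$. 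Subtracting $cI$ and recombining yields $\varphi_2X\varphi_2^*=X'$, i.e.\ unitary similarity. Conversely, a unitary $\Phi$ with $\Phi X\Phi^*=X'$ conjugates $H,K$, hence $P_1,P_2$, and by uniqueness of the positive square root it conjugates $A_1,A_2$ onto $B_1,B_2$; thus $\varphi=(\Phi,\Phi,\Phi)$ is an isometry of chains. This absorption of $\varphi_1$ and $\varphi_3$ is exactly the step that upgrades the naive two-sided equivalence (which would only see singular values and be tame) to a genuine conjugation.

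Conceptually, the reduction rests on identifying the classification of \eqref{o5ds} up to isometry with the classification of pairs of positive operators $(A_1A_1^*,A_2A_2^*)$ on $U_2$ up to simultaneous unitary conjugation, together with the elementary fact that an arbitrary operator is encoded by a pair of self-adjoint operators through $X=H+iK$; the shift by $cI$ serves only to make these self-adjoint operators positive and is harmless because $I$ is central. The main point requiring care is therefore not a hard estimate but the bookkeeping of the two directions above---verifying that $A_1A_1^*$ is a complete invariant of $A_1$ modulo the source unitary and that the positive square root is unique and conjugation-equivariant. Once this is in place, Theorem \ref{thee} follows, and the passage from a single operator to arbitrary systems of linear mappings is the standard remark recorded at the end of this section.
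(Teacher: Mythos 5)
Your reduction is correct, and it takes a genuinely different route from the paper's. The paper encodes an operator $X$ into the pair of block matrices $(M,N_X)$ with $M=\diag(I,2I,3I)$ and $N_X=\left[\begin{smallmatrix}I&0\\ I&I\\ I&X\end{smallmatrix}\right]$: the three distinct singular values of $M$ force the middle unitary $S_2$ to be block diagonal, and the rigid shape of $N_X$ then forces all diagonal blocks to coincide and to intertwine $X$ with $Y$. You instead observe that an isometry of $U_1\xrightarrow{A_1}U_2\xleftarrow{A_2}U_3$ sees exactly the pair $(A_1A_1^*,A_2A_2^*)$ up to simultaneous unitary conjugation on $U_2$, and you encode $X$ into such a pair via the Cartesian decomposition $X=H+iK$ and a positive shift. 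Both verifications are elementary and both correctly absorb the end unitaries $\varphi_1,\varphi_3$; yours is coordinate-free and makes the mechanism of that absorption transparent (and keeps all three spaces of equal dimension), while the paper's is purely matricial, needs no square roots or complex structure, and --- importantly --- its target $(M,N_X)$ is defined uniformly in $X$ with no auxiliary parameter.

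That last point is the one place where your write-up needs a repair. You let $c$ depend on $X$ (``large enough that $H+cI$ and $K+cI$ are positive definite''), but the assignment $X\mapsto{\cal A}(X)$ must be a single well-defined map if a set of canonical chains for \eqref{o5ds} is to yield canonical forms for unitary similarity, and your ``if and only if'' is proved only when the two operators are shifted by the \emph{same} $c$. With different shifts the correspondence genuinely breaks: ${\cal A}(0)$ formed with $c=2$ is literally the same chain as ${\cal A}\bigl((1+i)I\bigr)$ formed with $c=1$, although $0$ and $(1+i)I$ are not unitarily similar. The fix is easy --- for instance, first normalize to operators of norm less than $1$ (an invertible, similarity-preserving rescaling of the classification problem) and then take $c=2$ throughout, so that $X=(A_1A_1^*-2I)+i(A_2A_2^*-2I)$ is recovered from the chain --- but it must be said for the containment of classification problems, in the sense used after \eqref{48b}, to go through.
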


\begin{proof}
We say that matrices $X$ and $Y$ are
\emph{unitarily similar} if there
exists a unitary matrix $S$ such that
$S^{-1}XS=Y$.

Let us consider chains of mappings
\begin{equation}\label{pwy}
U_1\stackrel{{A}_1}
{\longrightarrow}U_2 \stackrel{{A}_2}
{\longleftarrow}U_3\quad\text{and}\quad
V_1\stackrel{{B}_1}
{\longrightarrow}V_2 \stackrel{{B}_2}
{\longleftarrow}V_3
\end{equation}
that are given in some orthonormal
bases by pairs of matrices $(M,N_X)$
and $(M,N_Y)$ in which
\[
M:=
\begin{bmatrix}
        I&0&0 \\ 0&2I&0 \\ 0&0&3I
\end{bmatrix},\quad N_X:=\begin{bmatrix}
        I&0\\ I&I\\ I&X
\end{bmatrix},\quad N_Y:= \begin{bmatrix}
        I&0\\ I&I\\ I&Y
\end{bmatrix},
\]
and all blocks are $m\times m$.

It suffices to prove that
\begin{equation}\label{48b}
\parbox{25em}
{the chains \eqref{pwy} are isometric if
and only if $X$ and $Y$ are unitarily
similar.}
\end{equation}
Indeed, assume we know a set of
canonical matrix pairs for
\eqref{o5ds}. We take those of them
that can be reduced to the form $(M,N_X)$
and reduce them to it. Due to
\eqref{48b}, the obtained blocks $X$
form a set of canonical matrices for
unitary similarity.

Let us prove \eqref{48b}.

``$\Longrightarrow$'' \ Let the chains
\eqref{pwy} be isometric; that is,
there exist unitary matrices
$S_1,S_2,S_3$ such that
\begin{equation}\label{jups}
S_2^{-1}MS_1=M,\qquad S_2^{-1}N_XS_3=N_Y.
\end{equation}
By the first equality in \eqref{jups},
\[S_1^*M^*S_2=M^*,\qquad
MM^*S_2=MS_1M^*=S_2MM^*.\] Since
$MM^*=I_m\oplus 4I_m \oplus 9I_m$, we have
$S_2=C_1\oplus C_2\oplus C_3$ for some
$m\times m$ matrices $C_1,C_2$, and
$C_3$.

By the second equality in \eqref{jups},
$S_2N_Y=N_XS_3$. Equating the
corresponding horizontal strips, we
obtain
\begin{equation}\label{g9o}
\begin{bmatrix}C_1& 0\end{bmatrix}
=\begin{bmatrix}I& 0\end{bmatrix}S_3,
\
\begin{bmatrix}C_2& C_2\end{bmatrix}
=\begin{bmatrix}I& I\end{bmatrix}S_3,\
\begin{bmatrix}C_3& C_3Y\end{bmatrix}
=\begin{bmatrix}I& X\end{bmatrix}S_3.
\end{equation}
Let $S_3=[R_{ij}]_{i,j=1}^2$. The first
equality in \eqref{g9o} implies that
$R_{11}=C_1$ and $R_{12}=0$. Since
$S_3$ is unitary, $R_{21}=0$ and so
$S_3=C_1\oplus R_{22}$. The second
equality in \eqref{g9o} implies that
$C_1=C_2=R_{22}$. The third equality in
\eqref{g9o} implies that
$C_3=C_2=R_{22}$ and $C_3Y=XC_3$. Thus,
$X$ and $Y$ are unitarily similar.

``$\Longleftarrow$'' \ Conversely, if
$C^{-1}XC=Y$ for some unitary $C$, then
\eqref{jups} holds for $S_1=S_2=C\oplus
C\oplus C$ and $S_3=C\oplus C$, and so
the chains \eqref{pwy} are isometric.
\end{proof}

Recall that a \emph{quiver} is a directed graph. Its \emph{representation} is given by assigning to each vertex a unitary space and to each arrow a linear mapping of the corresponding vector spaces. A representation is \emph{unitary} if all of its vector spaces are unitary.

It was shown in \cite[Section 2.3]{ser-uni} that the problem of classifying linear operators on unitary spaces up to unitary similarity
contains the problem of classifying unitary representations of an \emph{arbitrary} quiver. Thus, we cannot expect to find an observable system of invariants for linear operators on unitary
spaces. Nevertheless, we can reduce the matrix of \emph{any given} linear operator on a unitary space (moreover, the matrices of {any given} unitary representation of a quiver) to canonical form by using  Littlewood's algorithm; see \cite[Section 3]{ser-uni}.

In the same way, the problem of classifying pairs of linear operators on a vector space is considered as hopeless (and all classification problems that contain it are called \emph{wild}) since it contains the problem of classifying representations of each quiver. Nevertheless, we can reduce the matrices of any given representation of a quiver to canonical form by using Belitskii's algorithm; see \cite{bel-ser,ser}.

\section{Linear isomorphism of chains}
\label{s3}

In this section, we consider chains of
linear mappings
\begin{equation}\label{dfys}
{\cal A}:\quad
\xymatrix{
U_1\ar@{-}^{{A}_1}[r]
 &
U_2 \ar@{-}^{{A}_2}[r]
 &
U_3 \ar@{-}^{{A}_3}[r]
 &
\cdots \ar@{-}^{{A}_{t-1}}[r]
  &
U_{t}
}
\end{equation}
on vector spaces without scalar
product. Without complicating the proofs, we consider them over \emph{any} field $\mathbb
F$. In Theorem \ref{jtt} we recall the
well-known classification of such
chains up to linear isomorphisms (see
Definition \ref{def}(ii)). Next we fix
some subspaces of $U_1,\dots,U_t$ and
prove in Theorem \ref{ljtt} that the
set of their dimensions is a full
system of invariants of chains with
respect to linear isomorphisms. In
Section \ref{s4} we establish that this
set is also a full system of invariants
of chains with respect to topological
isomorphisms, which proves Theorem
\ref{the}.

\subsection{A classification of chains up
to linear isomorphisms}

The directions ($U_i\to U_{i+1}$ or
$U_i\leftarrow U_{i+1}$) of all linear
mappings ${A}_i$ in \eqref{dfys} can be
given by the directed graph
\begin{equation}\label{kuea}
G:\quad
\xymatrix{
1\ar@{-}^{\alpha_1}[r]
 &
2 \ar@{-}^{\alpha_2}[r]
 &
3 \ar@{-}^{\alpha_3}[r]
 &
\cdots \ar@{-}^{\alpha_{t-1}}[r]
  &
{t}
}
\end{equation}
in which each arrow $\alpha_i$ is
directed as ${A}_i$. (Thus, each chain
\eqref{dfys} defines a representation
of the quiver \eqref{kuea} and a linear
isomorphism of chains defines an
isomorphism of the corresponding
representations.) Write
\begin{equation}\label{fsk}
{\cal A}(k):=U_k,\qquad k=1,\dots, t.
\end{equation}
The \emph{direct sum} of chains ${\cal
A}$ and
\[
{\cal B}:\quad
\xymatrix{
V_1\ar@{-}^{{B}_1}[r]
 &
V_2 \ar@{-}^{{B}_2}[r]
 &
V_3 \ar@{-}^{{B}_3}[r]
 &
\cdots \ar@{-}^{{B}_{t-1}}[r]
  &
V_{t}
}\]
 with the same directed graph
\eqref{kuea} is the chain
\begin{equation*}\label{reph}
{\cal A}\oplus{\cal B}:\quad U_1\oplus V_1\,
\frac{\ \scriptstyle{A}_1\oplus{B}_1\ }{\qquad}
\,U_2\oplus V_2\,
\frac{\ \scriptstyle{A}_2\oplus{B}_2\ }{}\,\cdots\,
\frac{\ \scriptstyle{A}_{t-1}\oplus{B}_{t-1}\ }{\qquad}\,
U_{t}\oplus V_t.
\end{equation*}

For every pair of integers $(i,j)$ such
that $1\le i\le j\le t$, we define the
chain
\begin{equation*}\label{gte1}
{\cal L}_{ij}:\quad 0 \,\frac{\ \ \ }{}\,\cdots\,
\frac{\ \ \ }{}\,0\, \frac{\ \ \ }{}\, \mathbb F
\,\frac{\ \scriptstyle\mathbbm{1}\ }{}\,
\mathbb F \,\frac{\ \scriptstyle\mathbbm{1} \ }{}\,
\cdots \, \frac{\ \scriptstyle\mathbbm{1} \ }{}\, \mathbb F\,
\frac{\ \ \ }{}\,0\,
\frac{\ \ \ }{}\,\cdots\, \frac{\ \ \ }{}\,0
\end{equation*}
in which ``$\mathbbm{1}$'' is the
identity bijection and $\mathbb F$'s
are at the vertices $i,i+1,\dots,j$ of
\eqref{kuea}.

The following theorem is well known in
the theory of quiver representations;
the representations of \eqref{kuea} and
the other quivers that have a finite
number of nonisomorphic indecomposable
representations were classified by
Gabriel \cite{gab}.

\begin{theorem}\label{jtt}
Each chain $\cal A$ is linearly
isomorphic to a direct sum of chains of
the form ${\cal L}_{ij}$. This direct
sum is uniquely determined by $\cal A$,
up to permutation of summands.
\end{theorem}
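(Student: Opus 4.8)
The plan is to split the theorem into \emph{existence} --- every chain is linearly isomorphic to some direct sum of chains $\mathcal L_{ij}$ --- and \emph{uniqueness} of this sum up to permutation. Uniqueness I would obtain from the Krull--Schmidt theorem. By induction on the total dimension $\sum_k \dim U_k$, every chain is a finite direct sum of indecomposable chains (split off a proper direct summand while one exists). Each $\mathcal L_{ij}$ is itself indecomposable: by commutativity with the identity maps $\mathbbm 1$, an endomorphism of $\mathcal L_{ij}$ acts as a single scalar on all of its nonzero components, so $\operatorname{End}(\mathcal L_{ij}) \cong \mathbb F$ is local. Since indecomposable chains have local endomorphism rings (Fitting's lemma), Krull--Schmidt makes the indecomposable decomposition unique up to permutation and isomorphism. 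Granting existence, every indecomposable chain is some $\mathcal L_{ij}$, and distinct $\mathcal L_{ij}$ have distinct dimension vectors $(\dim \mathcal L_{ij}(1), \dots, \dim \mathcal L_{ij}(t))$ --- the indicator of the interval $\{i, \dots, j\}$ --- hence are nonisomorphic; so the decomposition into $\mathcal L_{ij}$'s is unique up to permutation.

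For existence I would argue by induction on the length $t$, the case $t=1$ being the decomposition of a vector space into lines $\mathcal L_{11}$. For the inductive step, delete the last vertex $t$ and the last mapping $A_{t-1}$, obtaining a shorter chain $\mathcal A'$ on the vertices $1, \dots, t-1$. By the induction hypothesis $\mathcal A'$ is linearly isomorphic to a direct sum $\bigoplus_r \mathcal L_{i_r j_r}$ with all $j_r \le t-1$; fix bases of $U_1, \dots, U_{t-1}$ adapted to this decomposition. It remains to re-attach $U_t$ through $A_{t-1}$ and bring $A_{t-1}$ to a normal form. Only the summands with $j_r = t-1$ have a nonzero component at the vertex $t-1$, so $A_{t-1}$ is determined by its action on (or into, according to the orientation) these one-dimensional components. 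Using a change of basis in $U_t$ together with automorphisms of $\mathcal A'$, one reduces $A_{t-1}$ so that each affected interval $\mathcal L_{i_r, t-1}$ is either extended to $\mathcal L_{i_r, t}$ or left unchanged, while the remaining directions of $U_t$ split off as copies of $\mathcal L_{tt}$; this yields the desired decomposition of $\mathcal A$.

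The technical heart, and the step I expect to be the main obstacle, is the reduction of $A_{t-1}$ using only \emph{admissible} basis changes, namely those induced by automorphisms of the already-decomposed chain $\mathcal A'$. These are far from arbitrary: a nonzero morphism $\mathcal L_{a, t-1} \to \mathcal L_{b, t-1}$ exists only when the two intervals are nested in a definite way, so the admissible operations among the components at vertex $t-1$ have a triangular (poset) structure governed by the left endpoints $i_r$, and one must verify that this restricted freedom still suffices to clear $A_{t-1}$ to the stated form. A convenient tool here is a \emph{splitting lemma} at sources and sinks: if at a sink $k$ the incoming mappings do not jointly span $U_k$, or at a source $k$ their kernels meet nontrivially, then a copy of $\mathcal L_{kk}$ splits off as a direct summand, because over a field every subspace has a complement. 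It is exactly this last point --- complements always exist, with no eigenvalue obstruction --- that makes the linear quiver of type $A_t$ of finite representation type and lets the whole argument go through over an arbitrary field $\mathbb F$, as claimed.
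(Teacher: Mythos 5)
First, a point of comparison: the paper does not prove Theorem \ref{jtt} at all --- it is quoted as a known special case of Gabriel's classification of quivers of finite representation type, with \cite{gab} cited for the statement and \cite{ser-cycle} for an algorithmic reduction. So you are supplying a proof where the paper supplies a reference, and your outline is the standard inductive proof of this special case. Your uniqueness half is complete and correct: an endomorphism of ${\cal L}_{ij}$ must act by one and the same scalar on every nonzero component because the connecting maps are bijections (the commutation relation forces equality of the scalars in either orientation), so $\operatorname{End}({\cal L}_{ij})\cong\mathbb F$ is local and each ${\cal L}_{ij}$ is indecomposable; the ${\cal L}_{ij}$ are pairwise nonisomorphic by their dimension vectors; and Fitting's lemma plus the Krull--Schmidt--Azumaya theorem then give uniqueness of the multiset of summands. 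Nothing more is needed there.

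The existence half, however, stops exactly at the decisive step. After decomposing ${\cal A}'$, the map $A_{t-1}$ becomes a matrix between $U_t$, on which the full general linear group acts, and the direct sum of the multiplicity spaces of the intervals ${\cal L}_{a,t-1}$, on which only a restricted group acts: the general linear group of each multiplicity space together with the one-dimensional $\operatorname{Hom}$-spaces between intervals with distinct left endpoints, which (one checks, using the orientation of the arrow at the shorter interval's left end) are nonzero in exactly one of the two directions for each pair, so the group is block-triangular. You must show that this biequivalence admits a canonical form that is a direct sum of partial identity blocks with no off-diagonal residue. You correctly identify this as the technical heart and then write that ``one must verify'' it --- that is, you have not verified it. The claim is true and the verification is a finite staircase reduction (bring one strip to a partial identity, pass to its stabilizer, which is again block-triangular, and recurse; Section \ref{ssss} of the paper carries this out explicitly for $t=3$, and \cite{ser-cycle} in general), but as written your argument asserts rather than proves the one statement where something could conceivably fail, namely that the admissible row operations suffice to annihilate the entries coupling different left-endpoint blocks. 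Until that reduction is actually performed, the existence argument is an outline, not a proof.
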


An algorithm for constructing this
canonical form of chains of linear
mappings over $\mathbb C$ is given in
\cite[Section 4]{ser-cycle}; it uses
only transformations of unitary
equivalence of matrices: $M\mapsto
S_1MS_2$ in which $S_1$ and $S_2$ are
unitary.

\begin{corollary}
Each chain $\cal A$ of linear mappings
on unitary (or Euclidean) spaces is
topologically isomorphic to a direct
sum of chains of the form ${\cal
L}_{ij}$. This direct sum is uniquely
determined by $\cal A$, up to
permutation of summands.
\end{corollary}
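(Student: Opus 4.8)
The plan is to obtain the statement by combining the linear classification of Theorem \ref{jtt} with the equivalence of topological and linear isomorphism established in Theorem \ref{the}. First I would prove existence. By Theorem \ref{jtt}, the chain $\cal A$ is linearly isomorphic to a direct sum $\bigoplus {\cal L}_{ij}$. Regarding each coordinate space $\mathbb F=\mathbb C$ (or $\mathbb R$) appearing in the ${\cal L}_{ij}$ as a unitary (or Euclidean) space with its standard scalar product, this direct sum is itself a chain of linear mappings on unitary (or Euclidean) spaces. Since every linear bijection of such spaces is a homeomorphism --- exactly the implication ``linear isomorphism $\Rightarrow$ topological isomorphism'' recorded before Theorem \ref{the} --- the linear isomorphism supplied by Theorem \ref{jtt} is in particular a topological isomorphism. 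Hence $\cal A$ is topologically isomorphic to $\bigoplus {\cal L}_{ij}$.

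For uniqueness, suppose $\cal A$ is topologically isomorphic to two such direct sums, $\bigoplus {\cal L}_{ij}$ and $\bigoplus {\cal L}_{i'j'}$. Composing one of these topological isomorphisms with the inverse of the other shows that the two direct sums are topologically isomorphic to each other. By Theorem \ref{the} (the nontrivial direction ``topological $\Rightarrow$ linear''), they are then linearly isomorphic. The uniqueness clause of Theorem \ref{jtt} now forces the families $\{{\cal L}_{ij}\}$ and $\{{\cal L}_{i'j'}\}$ to coincide up to a permutation of summands, which is the required assertion.

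There is no genuinely hard step: the whole content is carried by the two cited theorems, and the argument is simply existence via the easy implication applied to Theorem \ref{jtt}, together with uniqueness via the hard implication of Theorem \ref{the} feeding into the uniqueness part of Theorem \ref{jtt}. The only point deserving care is the identification made above --- that a direct sum of the ${\cal L}_{ij}$ must be viewed as a chain on \emph{unitary} (or \emph{Euclidean}) spaces, by equipping the one-dimensional spaces $\mathbb F$ with their standard inner product --- so that Theorems \ref{the} and \ref{jtt}, which are stated for chains on such spaces, legitimately apply to these canonical objects.
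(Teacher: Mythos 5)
Your argument is correct and is exactly the intended one: the paper states this result without a separate proof, naming it literally a ``Corollary of Theorems \ref{the} and \ref{jtt}'', and your combination --- existence via the easy implication (linear $\Rightarrow$ topological) applied to Theorem \ref{jtt}, uniqueness via the hard direction of Theorem \ref{the} fed into the uniqueness clause of Theorem \ref{jtt} --- is precisely that deduction. Nothing further is needed.
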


Let $\cal A$ be any chain of the form
\eqref{dfys}. In each of its spaces
$U_i$, we define a series of subspaces
\begin{equation}\label{ryi}
0=U_{i0}\subset U_{i1}\subset U_{i2}\subset \cdots
\subset U_{ii} =U_{i},\qquad i=1,\dots,t
\end{equation}
by induction: $0=U_{10}\subset
U_{11}=U_1$ and if \eqref{ryi} is
constructed for $i<t$ then
\begin{multline}\label{kjy}
(U_{i+1,1},\dots,U_{i+1,i})\\:=
  \begin{cases}
({A}_iU_{i1},\dots, {A}_iU_{ii}) &
\text{if }{A}_i: U_i\to U_{i+1}, \\
(\Ker {A}_i, {A}_i^{-1}U_{i1}, \dots,
{A}_i^{-1}U_{i,i-1}) & \text{if }{A}_i:
U_i\leftarrow U_{i+1}
  \end{cases}
\end{multline}
(here $A_{i}^{-1}U_{ij}$ denotes the
preimage of $U_{ij}$).

\subsection{An example}
\label{ssss}

Each chain of the form
\begin{equation}\label{pgt}
{\cal A}:
\quad U_1\stackrel{{A}_1}
{\longrightarrow}U_2 \stackrel{{A}_2}
{\longleftarrow}U_3
\end{equation}
is given by the pair of matrices
$(M_1,M_2)$ in some bases of
$U_1,U_2,U_3$. Changing the bases, we
can reduce the pair by transformations
\begin{equation}\label{jup}
(M_1,M_2)\mapsto (S_2^{-1}M_1S_1,S_2^{-1}M_2S_3),
\qquad\text{$S_1,S_2,S_3$ are
nonsingular.}
\end{equation}
It is convenient to give $(M_1,M_2)$ by
the block matrix $[M_1|M_2]$ since the
rows of $M_1$ and $M_2$ are transformed
by the same matrix $S_2^{-1}$. Due to
\eqref{jup}, we can reduce it by
elementary row transformations (i.e.,
by simultaneous elementary
transformations with rows of $M_1$ and
$M_2$) and by elementary column
transformations within $M_1$ and $M_2$.
Each $[M_1|M_2]$ can be reduced by
these transformations to its canonical
form
\begin{equation}\label{wtu}
\begin{MAT}(c){|c|c|}
\first-
N_1&N_2\\-
\end{MAT}
=\begin{MAT}(c){|cc|c.c|}
\first-
0&I_p&
\begin{matrix}
0&I_r\\0&0
\end{matrix}&
\begin{matrix}
0\\0
\end{matrix}
\\.
0&0&
\begin{matrix}
0\\0
\end{matrix}&\begin{matrix}
I_q\\0
\end{matrix}\\-
\end{MAT}
\end{equation}
as follows (see \cite[Section
4]{ser-cycle}). We first reduce $M_1$
to the form
\begin{equation}\label{teo}
\begin{matrix}
  0 & I \\
  0 & 0 \\
\end{matrix}
\end{equation}
 and denote the obtained block
matrix by $[N_1|M'_2]$. Then we extend
to $M'_2$ the partition of $N_1$ into
two horizontal strips and reduce the
second horizontal strip of $M'_2$ to
the form \eqref{teo}:
\[
\begin{MAT}(c){|cc|c.c|}
\first-
0&I_p&
M_{11}&
M_{12}\\.
0&0&
\begin{matrix}
0\\0
\end{matrix}&\begin{matrix}
I_q\\0
\end{matrix}\\-
\end{MAT}
\]
We make $M_{12}$ equal to zero by
adding linear combinations of rows of
$I_q$. At last, we reduce $M_{11}$ to
the form \eqref{teo} by elementary
transformations; these transformations
may spoil $I_p$, we restore it by
column transformations. The obtained
block matrix has the form \eqref{wtu}.

For example, let the chain \eqref{pgt}
be given in some bases
$\{e_i\}_{i=1}^5$, $\{f_i\}_{i=1}^6$,
and $\{g_i\}_{i=1}^5$ of $U_1$, $U_2$,
and $U_3$ by the following canonical
block matrix of the form \eqref{wtu}:
\begin{equation*}\label{fwo}
\begin{MAT}(c){ccccccccccc}
&\scriptstyle\it e_1&\scriptstyle\it e_2&
\scriptstyle\it e_3&\scriptstyle\it e_4&
\scriptstyle\it e_5&\scriptstyle\it g_1&
\scriptstyle\it g_2&\scriptstyle\it g_3&
\scriptstyle\it g_4&\scriptstyle\it g_5\\
\scriptstyle\it f_1&0&0&1&0&0&   0&1&0&0&0\\
\scriptstyle\it f_2&0&0&0&1&0&   0&0&1&0&0\\
\scriptstyle\it f_3&0&0&0&0&1&   0&0&0&0&0\\
\scriptstyle\it f_4&0&0&0&0&0&   0&0&0&1&0\\
\scriptstyle\it f_5&0&0&0&0&0&   0&0&0&0&1\\
\scriptstyle\it f_6&0&0&0&0&0&   0&0&0&0&0
\addpath{(1,0,4)rrrrrrrrrruuuuuulllllllllldddddd}
\addpath{(6,0,4)uuuuuu}
\addpath{(1,3,.)rrrrrrrrrr}
\addpath{(9,0,.)uuuuuu}\\
\end{MAT}
\end{equation*}
Then $\cal A$ is the direct sum of 9
chains that are given by the action of
the mappings on the basic vectors as
follows:
\begin{equation*}\label{hro}
\begin{split}
\xymatrix@C=20mm@R=2mm{
e_1\ar@{|->}[r] &0&0 \ar@{|->}[l]\\
e_2\ar@{|->}[r] &0&0 \ar@{|->}[l]\\
0\ar@{|->}[r] &0&g_1 \ar@{|->}[l]\\
e_3\ar@{|->}[r] &f_1&g_2 \ar@{|->}[l]\\
e_4\ar@{|->}[r] &f_2&g_3 \ar@{|->}[l]\\
e_5\ar@{|->}[r] &f_3&0 \ar@{|->}[l]\\
0\ar@{|->}[r] &f_4&g_4 \ar@{|->}[l]\\
0\ar@{|->}[r] &f_5&g_5 \ar@{|->}[l]\\
0\ar@{|->}[r] &f_6&0 \ar@{|->}[l]\\
}\end{split}\end{equation*} (For
simplicity of notation, we write
$0\mapsto f_i$ instead of $0\mapsto
0$.) Thus, $\cal A$ is linearly
isomorphic to
\[
{\cal L}_{11}\oplus {\cal L}_{11}\oplus
{\cal L}_{33}\oplus {\cal L}_{13}\oplus
{\cal L}_{13}\oplus {\cal L}_{12}\oplus
{\cal L}_{23}\oplus {\cal L}_{23}\oplus
{\cal L}_{22}.
\]
The subspaces $U_{ij}$ defined in
\eqref{ryi} and \eqref{kjy} are the
following:
\begin{gather*}\nonumber
U_{10}=0,\quad U_{11}
=U_{1};\\\label{ryr} U_{20}=0,\quad
U_{21}=\langle f_1,f_2,f_3\rangle,\quad
U_{22}=U_2;\\\nonumber U_{30}=0,\quad
U_{31}=\langle g_1\rangle,\quad
U_{32}=\langle g_1,g_2,g_3\rangle,\quad
U_{33} =U_3;
\end{gather*}
here $\langle x,y,\dots, z\rangle$
denotes the subspace spanned by
$x,y,\dots, z$.

Note that
\begin{equation}\label{gey}
U_{32}=U_{31}\oplus\langle g_2\rangle
\oplus \langle g_3\rangle
\end{equation}
in which $\langle g_2\rangle$ and
$\langle g_3\rangle$ are the vector
spaces of the chains given by
\[\xymatrix{
e_3\ar@{|->}[r] &f_1&g_2 \ar@{|->}[l]\quad
\text{and}\quad
e_4\ar@{|->}[r] &f_2&g_3 \ar@{|->}[l]\\
}
\]

\subsection{A system of invariants}

\begin{theorem}\label{ljtt}
Each chain $\cal A$ is fully
determined, up to linear isomorphism,
by the indexed set
\begin{equation}\label{dsi}
\{n_{ij}\}_{1\le j\le i\le t}\qquad\text{in which }
n_{ij}:=\dim U_{ij}
\end{equation}
and $U_{ij}$  are defined in
\eqref{kjy}.
\end{theorem}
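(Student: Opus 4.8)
The plan is to combine the Krull--Schmidt statement of Theorem \ref{jtt} with the additivity of the subspaces $U_{ij}$ under direct sums. By Theorem \ref{jtt} we may write
\[
\mathcal A\;\cong\;\bigoplus_{1\le k\le l\le t} m_{kl}\,\mathcal L_{kl},
\]
with nonnegative integers $m_{kl}$ that are uniquely determined by $\mathcal A$; the family $\{m_{kl}\}$ is therefore a complete invariant for linear isomorphism. Thus it suffices to prove that the data $\{n_{ij}\}$ and the multiplicities $\{m_{kl}\}$ determine each other. One direction is immediate: a linear isomorphism $\varphi:\mathcal A\xrightarrow{\sim}\mathcal B$ carries each $U_{ij}(\mathcal A)$ onto $U_{ij}(\mathcal B)$, since the defining operations in \eqref{kjy} (image, kernel, preimage) commute with $\varphi$ by the commutativity of the squares; hence isomorphic chains share all $n_{ij}$, and $\{m_{kl}\}$ determines $\{n_{ij}\}$.

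For the reverse direction I would first establish additivity. The operations $U\mapsto A_iU$, $\Ker A_i$, and $A_i^{-1}U$ all respect direct sums of chains, so by induction on $i$ one gets $U_{ij}(\mathcal A'\oplus\mathcal A'')=U_{ij}(\mathcal A')\oplus U_{ij}(\mathcal A'')$, and therefore
\[
n_{ij}(\mathcal A)=\sum_{1\le k\le l\le t} m_{kl}\,c_{ij}^{kl},\qquad c_{ij}^{kl}:=\dim U_{ij}(\mathcal L_{kl}).
\]
This exhibits the column vector $(n_{ij})$ as the image of $(m_{kl})$ under the integer matrix $C=(c_{ij}^{kl})$, whose rows are indexed by the pairs $1\le j\le i\le t$ and whose columns by the pairs $1\le k\le l\le t$; both index sets have $t(t+1)/2$ elements, so $C$ is square.

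The heart of the argument, and the step I expect to be the main obstacle, is to compute $c_{ij}^{kl}$ on the building blocks and to show that $C$ is invertible. Each $c_{ij}^{kl}$ equals $0$ or $1$, its value being governed by inequalities among $i,j,k,l$ that depend on the orientation pattern of \eqref{kuea}. When all arrows point to the right the recursion \eqref{kjy} only takes images, and one finds $c_{ij}^{kl}=1$ exactly when $k\le j\le i\le l$; a backward arrow replaces an image by a kernel and a preimage and shifts the relevant inequality accordingly (for the orientation $U_1\to U_2\leftarrow U_3$ with $t=3$ a direct computation gives a $6\times6$ matrix of determinant $-1$). I would prove invertibility by exhibiting an explicit inverse: in the all-forward case $n_{ij}=\sum_{k\le j,\;l\ge i} m_{kl}$ is a two-dimensional cumulative sum, inverted by the finite-difference formula $m_{kl}=n_{l,k}-n_{l,k-1}-n_{l+1,k}+n_{l+1,k-1}$ (with out-of-range $n$ set to $0$), and each orientation pattern yields an analogous alternating-sum inverse. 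Equivalently, one orders the two index sets so that the bijection $(i,j)\leftrightarrow(k,l)$ makes $C$ triangular with $1$'s on the diagonal. In every case $C$ is unimodular, so $\{n_{ij}\}$ determines $\{m_{kl}\}$.

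Combining the two directions, two chains have the same indexed set $\{n_{ij}\}$ if and only if they have the same multiplicities $\{m_{kl}\}$, which by Theorem \ref{jtt} holds if and only if they are linearly isomorphic. The only genuinely laborious part is the orientation-dependent bookkeeping needed to evaluate $c_{ij}^{kl}$ and to verify unimodularity of $C$ for an arbitrary arrangement of the arrows.
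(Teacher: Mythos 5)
Your reduction is sound as far as it goes: the additivity of the subspaces $U_{ij}$ under direct sums is correct, the forward direction (isomorphic chains share all $n_{ij}$) is correct, and writing $n_{ij}=\sum_{k\le l}m_{kl}\,c_{ij}^{kl}$ with $c_{ij}^{kl}=\dim U_{ij}(\mathcal L_{kl})$ correctly reduces the theorem to the invertibility of the square matrix $C$. The gap is that this invertibility \emph{is} the theorem, and you prove it only for the all-forward orientation (where your cumulative-sum inversion is fine) plus one $6\times 6$ determinant for a single $t=3$ pattern. For a general arrangement of arrows you never compute $c_{ij}^{kl}$, never exhibit the ordering that is supposed to make $C$ triangular, and never verify that the diagonal entries are $1$; you explicitly defer this as ``laborious bookkeeping.'' It is not automatic: the recursion \eqref{kjy} mixes images, kernels and preimages in a way that depends on the whole prefix of orientations, so the condition ``$c_{ij}^{kl}=1$'' is not a simple containment of intervals as in the forward case, and the claim that some bijection $(i,j)\leftrightarrow(k,l)$ triangularizes $C$ for \emph{every} orientation pattern needs an actual argument. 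As submitted, the central claim is asserted rather than proved.

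The paper closes exactly this gap by induction on $t$ rather than by inverting $C$ globally. It proves the stronger statements that each quotient step $U_{ti}/U_{t,i-1}$ is spanned by the $t$-th components of precisely those canonical summands $\mathcal L_{kl}$ whose first nonzero vertex is a fixed position $a_i$ (with the $a_i$ pairwise distinct), so that $k_i:=n_{ti}-n_{t,i-1}$ counts, for each $i$, how many summands of the restricted chain $\mathcal A'$ on the first $t-1$ vertices get extended to vertex $t$; a two-case analysis on the direction of the last arrow then reconstructs the multiset $\{(p_\ell,q_\ell)\}$ from that of $\mathcal A'$ together with the numbers $k_1,\dots,k_t$. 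In your language, that induction is a constructive proof that $C$ is unimodular, organized so that only the last arrow's orientation has to be examined at each step. To complete your proof you should either carry out that induction (statements (ii)--(iv) in the paper are the bookkeeping device that makes it work) or actually derive the closed form of $c_{ij}^{kl}$ for an arbitrary orientation and prove the triangularity claim.
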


\begin{proof}
By Theorem \ref{jtt}, $\cal A$
possesses a canonical decomposition
\begin{equation}\label{gsik}
{\cal
A}:=\bigoplus_{\ell=1}^s{\cal A}_{\ell},\qquad
{\cal A}_{\ell}\simeq
{\cal L}_{p_{\ell}q_{\ell}},
\end{equation}
whose summands are determined up to
renumbering and linear isomorphisms of
summands. Thus, \emph{${\cal A}$ is
determined up to linear isomorphism by
the family of pairs
$\{(p_{\ell},q_{\ell})\}_{\ell=1}^s$}
and this family is determined by ${\cal
A}$ up to renumbering (i.e.,
$\{(p_{\ell},q_{\ell})\}_{\ell=1}^s$ is
an unordered set with repeating
elements).

For technical reason, it is better to
prove the following statements that are
stronger than the theorem:
\begin{itemize}
  \item[(i)] $\{n_{ij}\}_{1\le j\le
      i\le t}$ uniquely determines
      $\{(p_{\ell},q_{\ell})\}_{\ell=1}^{s}$,
      up to renumbering,
  \item[(ii)] there are indices
      $\ell (i,j)\in \{1,\dots,s\}$
      such that each of the spaces
      $U_{t1},\dots, U_{tt}$
      defined in \eqref{ryi} is
      decomposed into the direct
      sum
\begin{equation}\label{tp}
U_{ti}=U_{t,i-1}\oplus {\cal
A}_{\ell (i,1)}(t)\oplus\dots\oplus
{\cal A}_{\ell
(i,r_i)}(t)
\end{equation}
(see \eqref{fsk}; we put $r_i:=0$
if $U_{t,i-1}=U_{ti}$).\footnote{An
example of this decomposition is
given in \eqref{gey}, in which
$t=3$, $i=r_i=2$, ${\cal A}_{\ell
(2,1)}(3)=\langle g_2\rangle$, and
${\cal A}_{\ell (2,2)}(3)=\langle
g_3\rangle$.}
  \item[(iii)] all chains ${\cal
      A}_{\ell (i,1)},\dots, {\cal
      A}_{\ell (i,r_i)}$ have the
      first nonzero space at the
      same position, i.e.
\begin{equation*}\label{rte}
p_{\ell
(i,1)}=\dots= p_{\ell
(i,r_i)}=:a_i,
\end{equation*}
\item[(iv)] $a_i\ne a_j$ if $i\ne
    j$.
\end{itemize}

We use induction on $t$. The induction
base is trivial: the statements
(i)--(iv) hold for chains with $2$
vector spaces; that is, for
$U_1\stackrel{{A}_1}
{\longrightarrow}U_2$ and
$U_1\stackrel{{A}_1}
{\longleftarrow}U_2$.

Suppose that (i)--(iv) hold for chains
with $t-1$ vector spaces, in
particular, for the restriction
\[
{\cal A}':\quad
\xymatrix{
U_1\ar@{-}^{{A}_1}[r]
 &
U_2 \ar@{-}^{{A}_2}[r]
 &
U_3 \ar@{-}^{{A}_3}[r]
 &
\cdots \ar@{-}^{{A}_{t-2}}[r]
  &
U_{t-1}
}
\]
of ${\cal A}$ to the first $t-1$
spaces. We can suppose that the
summands in \eqref{gsik} are numbered
such that
\begin{equation}\label{sik}
\max(p_1,\dots,p_{s'}) <t= p_{s'+1}=\dots=p_s.
\end{equation}
The canonical decomposition of ${\cal
A}'$ can be obtained from \eqref{gsik}
as follows:
\begin{equation*}\label{gsie}
{\cal
A}':=\bigoplus_{\nu=1}^{s'}{\cal A}'_{\nu},
\qquad
{\cal A}'_{\nu}\simeq
{\cal L}_{p_{\nu}q'_{\nu}},\qquad
q_{\nu}':=\min(t-1,q_{\nu}),
\end{equation*}
in which $s'$ is defined in \eqref{sik}
and every ${\cal A}'_{\nu}$ is the
restriction of ${\cal A}_{\nu}$ to the
first $t-1$ vector spaces.

By induction hypothesis,
\begin{itemize}
  \item $\{n_{ij}\}_{1\le j\le i\le
      t-1}$ uniquely determines
      $\{(p_{\nu},q'_{\nu})\}_{\nu=1}^{s'}$,
      up to renumbering,
  \item there are indices $\nu
      (i,j)\in \{1,\dots,s'\}$ such
      that each of the spaces
      $U_{t-1,1},\dots,
      U_{t-1,t-1}$ is decomposed
      into the direct sum
\begin{align}\nonumber
U_{t-1,i}&=U_{t-1,i-1}\oplus {\cal
A}'_{\nu (i,1)}(t-1)\oplus\dots\oplus
{\cal A}'_{\nu
(i,r'_i)}(t-1)\\&=U_{t-1,i-1}\oplus {\cal
A}_{\nu (i,1)}(t-1)\oplus\dots\oplus
{\cal A}_{\nu
(i,r'_i)}(t-1),\label{tpje}
\end{align}
  \item $p_{\nu (i,1)}=\dots=
      p_{\nu (i,r'_i)}=:b_i,$

\item $b_i\ne b_j$ if $i\ne j$.
\end{itemize}

We suppose that the summands in
\eqref{gsik} are numbered such that
\begin{equation}\label{feo}
\begin{split}
{\cal
A}_{\nu (i,1)}(t)\ne 0,\ \dots,\ {\cal
A}_{\nu (i,k_i)}(t)\ne 0,\\ {\cal
A}_{\nu (i,k_i+1)}(t)=\dots={\cal
A}_{\nu (i,r'_i)}(t)=0.
\end{split}
\end{equation}

Let us prove (i)--(iv). Consider two
cases that are differ in the direction
of the last arrow in \eqref{kuea}.

\emph{Case 1:}
$\alpha_{t-1}:(t-1)\longrightarrow t$.
By \eqref{kjy},
\[
U_{t1}={A}_{t-1}U_{t-1,1},\ \dots,\
U_{t,t-1}={A}_{t-1}U_{t-1,t-1},\quad U_{tt}=U_t.
\]
By \eqref{tpje}, \eqref{feo}, and
\eqref{sik}, we have
\[
U_{ti}=
  \begin{cases}
   U_{t,i-1}\oplus {\cal
A}_{\nu (i,1)}(t)\oplus\dots\oplus
{\cal A}_{\nu
(i,k_i)}(t) & \text{if $i<t$},
               \\
 U_{t,t-1}\oplus {\cal
A}_{s'+1}(t)\oplus\dots\oplus
{\cal A}_{s}(t)    & \text{if $i=t$,}
  \end{cases}
\]
which is the desired decomposition
\eqref{tp}.

\emph{Case 2:}
$\alpha_{t-1}:(t-1)\longleftarrow t$.
By \eqref{kjy},
\[
U_{t1}=\Ker {A}_{t-1},\
U_{t2}={A}_{t-1}^{-1}U_{t-1,1},\ \dots,\
U_{tt}={A}_{t-1}^{-1}U_{t-1,t-1}=U_t.
\]
By \eqref{sik}, \eqref{tpje}, and
\eqref{feo}, we have
\[
U_{ti}=
  \begin{cases}
 {\cal
A}_{s'+1}(t)\oplus\dots\oplus
{\cal A}_{s}(t)    & \text{if $i=1$,}
\\
   U_{t,i-1}\oplus {\cal
A}_{\nu (i-1,1)}(t)\oplus\dots\oplus
{\cal A}_{\nu
(i-1,k_{i-1})}(t) & \text{if $i>1$},
  \end{cases}
\]
which is the desired decomposition
\eqref{tp}.

In both the cases, the family of pairs
$\{(p_{\nu},q_{\nu})\}_{\nu=1}^s$
(which is determined up to renumbering)
can be obtained from
$\{(p_{\nu},q'_{\nu})\}_{\nu=1}^{s'}$
by replacing $k_i$ pairs $(a_i,t-1)$
with $(a_i,t)$ for each $i=1,\dots,t-1$
and by attaching $k_t:=s-s'$ pairs
$(t,t)$. This proves the statement (i)
since $k_1,\dots,k_t$ are expressed via
$n_{ij}$:  \[k_i=\dim U_{ti}-\dim
U_{t,i-1}=n_{ti}-n_{t,i-1},\qquad
i=1,\dots,t\] (we set $n_{t0}:=0$).

The statements (ii)--(iv) follow from
the induction hypothesis and Cases 1
and 2.
\end{proof}

\section{Topological isomorphism of chains}
\label{s4}

The goal of this section is to prove
Theorem \ref{the}.

Let $\varphi:\cal
A\xrightarrow{\sim}{\cal B}$ be a
topological isomorphism of chains of
the form \eqref{dfy} and \eqref{dfy1}.
Due to Theorem \ref{ljtt}, it suffices
to prove that their sets \eqref{dsi}
coincide; that is, \[\dim U_{ij}=\dim
V_{ij}\qquad \text{for all }i,j,\] in
which $U_{ij}$ are the vector subspaces
of $U_i$ that were constructed in
\eqref{kjy}, and $V_{ij}$ are the
vector subspaces of $V_i$ that are
analogously constructed by the chain
${\cal B}$. Due to Definition
\ref{def}(iii), the topological
isomorphism $\varphi:{\cal
A}\xrightarrow{\sim}{\cal B}$ is formed
by the homeomorphisms $\varphi_i:U_i\to
V_i$. It suffices to show that each
$\varphi_i$ maps $U_{ij}$ on $V_{ij}$
since then each $U_{ij}$ is
homeomorphic to $V_{ij}$ and by
\cite{hur+wal} all homeomorphic vector
spaces have the same dimension. What is
left is to prove the following lemma.

\begin{lemma}\label{kiy}
If $\varphi:\cal
A\xrightarrow{\sim}{\cal B}$ is a
topological isomorphism of chains
\eqref{dfy} and \eqref{dfy1}, then
\begin{equation}\label{jte}
\varphi_iU_{ij}=V_{ij}\qquad\text{for all }
i=1,\dots,t\text{ and } j=1,\dots,i.
\end{equation}
\end{lemma}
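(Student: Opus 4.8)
The plan is to prove \eqref{jte} by induction on $i$, mirroring the inductive construction of the subspaces $U_{ij}$ in \eqref{ryi}--\eqref{kjy}. The base case $i=1$ is immediate, since $U_{10}=0=V_{10}$ and $U_{11}=U_1$, $V_{11}=V_1$, so trivially $\varphi_1 U_{10}=V_{10}$ and $\varphi_1 U_{11}=V_{11}$. For the inductive step, I assume $\varphi_i U_{ij}=V_{ij}$ for all $j=0,\dots,i$, and I must deduce $\varphi_{i+1}U_{i+1,j}=V_{i+1,j}$ for all relevant $j$. The argument splits into the same two cases as the definition \eqref{kjy}, according to the direction of the arrow ${A}_i$.

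First I would treat the forward case ${A}_i:U_i\to U_{i+1}$, where $U_{i+1,j}={A}_i U_{ij}$ and $V_{i+1,j}={B}_i V_{ij}$. Here the commutativity relation $\varphi_{i+1}{A}_i={B}_i\varphi_i$ does the work: applying $\varphi_{i+1}$ to $U_{i+1,j}={A}_i U_{ij}$ gives $\varphi_{i+1}{A}_i U_{ij}={B}_i\varphi_i U_{ij}={B}_i V_{ij}=V_{i+1,j}$, using the induction hypothesis in the middle. The backward case ${A}_i:U_i\leftarrow U_{i+1}$ is analogous but uses $\varphi_i {A}_i={B}_i\varphi_{i+1}$ and requires handling the two kinds of entries in \eqref{kjy} separately: the kernel term $U_{i+1,1}=\Ker {A}_i$ and the preimage terms $U_{i+1,j+1}={A}_i^{-1}U_{ij}$. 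For the kernel, I would show $\varphi_{i+1}(\Ker {A}_i)=\Ker {B}_i$ directly from ${B}_i\varphi_{i+1}=\varphi_i {A}_i$ together with the bijectivity of $\varphi_i$. For the preimages, I would verify $\varphi_{i+1}({A}_i^{-1}U_{ij})={B}_i^{-1}V_{ij}$ by a set-theoretic chasing of the commutative square, again exploiting that $\varphi_i,\varphi_{i+1}$ are bijections so that images and preimages are interchanged correctly.

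The crucial point to emphasize is that every step uses \emph{only} the bijectivity of the $\varphi_i$ together with the commutativity of the squares, and \emph{never} their linearity. This is exactly why the lemma holds for topological isomorphisms and not merely for linear ones: the subspaces $U_{ij}$ were deliberately constructed in \eqref{kjy} using the purely set-theoretic operations of direct image, preimage, and kernel, all of which are preserved by any system of bijections making the diagram commute. Thus no topology is even needed inside the proof of the lemma itself; the topology enters only afterward, when one invokes \cite{hur+wal} to convert the homeomorphism $\varphi_i|_{U_{ij}}:U_{ij}\to V_{ij}$ into the equality of dimensions $\dim U_{ij}=\dim V_{ij}$.

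The main obstacle I anticipate is the bookkeeping in the backward case: the index shift in \eqref{kjy} (the kernel occupies position $1$ while the preimages fill positions $2,\dots,i$) must be tracked carefully so that the asserted equality $\varphi_{i+1}U_{i+1,j}=V_{i+1,j}$ lines up the correct subspace on each side. A secondary subtlety is the preimage identity $\varphi_{i+1}({A}_i^{-1}U_{ij})={B}_i^{-1}V_{ij}$: to prove $\subseteq$ one pushes an element $u\in {A}_i^{-1}U_{ij}$ through the square and lands in $V_{ij}$ by hypothesis, while for $\supseteq$ one needs that $\varphi_{i+1}$ is surjective so that every element of ${B}_i^{-1}V_{ij}$ is of the form $\varphi_{i+1}(u)$, after which commutativity forces $u\in {A}_i^{-1}U_{ij}$. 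Once these two inclusions are established the induction closes, and the lemma follows.
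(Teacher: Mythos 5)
Your proposal is correct and is essentially the paper's own argument: induction on the chain position, an element chase through the commuting squares split into the two cases given by the direction of $A_i$, using only bijectivity of the $\varphi_i$ and commutativity (no linearity and no topology), with the topology entering only afterwards via the dimension invariance of \cite{hur+wal}. The only organizational difference is that the paper proves just the inclusion $\varphi_{i+1}U_{i+1,j}\subset V_{i+1,j}$ and obtains the reverse inclusion by running the same argument on $\varphi^{-1}:{\cal B}\xrightarrow{\sim}{\cal A}$, whereas you chase both inclusions directly; note that your kernel step (like the paper's) tacitly uses $\varphi_i(0)=0$, which bijectivity alone does not give and which one secures by first replacing each $\varphi_i$ with $\varphi_i-\varphi_i(0)$, a normalization that preserves the commutativity of the squares.
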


\begin{proof} The assertion \eqref{jte} holds for
$i=1$ since $\varphi _1:U_1\to V_1$ is
a bijection. Suppose that \eqref{jte}
holds for $i=k$ (and all
$j=1,\dots,k$); let us prove it for
$i=k+1$. It suffices to prove that
\begin{equation}\label{jteh}
\varphi_{k+1}U_{k+1,j}\subset V_{k+1,j}
\qquad\text{for all }
j=1,\dots,k+1
\end{equation}
since then we can use \eqref{jteh} for
$\varphi^{-1}:{\cal
B}\xrightarrow{\sim} {\cal A}$ instead
of $\varphi $ and obtain
\[\varphi_{k+1}^{-1}V_{k+1,j}\subset
U_{k+1,j}
\qquad\text{for all }
j=1,\dots,k+1,\] which ensures
$\varphi_{k+1}U_{k+1,j}\supset
V_{k+1,j}$.

In the case ${A}_k:
      U_k\to U_{k+1}$, the
      inclusion \eqref{jteh} holds
      since if $y\in U_{k+1,j}$ and
      $x\in {A}_k^{-1}y\subset
U_{kj}$, then
\[
\xymatrix{
U_{kj}\ar@{->}^{{A}_k}_{\text{on}}[r]
\ar[d]_{\varphi_k}^{\text{biect}}
 &
U_{k+1,j} \ar[d]_{\varphi_{k+1}}
 &&
x \ar@{|->}[r]
\ar@{|->}[d]
  &
y\ar@{|->}[d]
           \\
V_{kj}\ar@{->}^{{B}_k}[r]
 &
V_{k+1}
 &&
\varphi_k x \ar@{|->}[r]
  &
{B}_k\varphi_k x
 }
\]
Thus, $\varphi_{k+1}y= {B}_k\varphi_k
x\in {B}_k V_{kj}=V_{k+1,j}$.

In the case ${A}_k:
      U_k\leftarrow U_{k+1}$,  the
      inclusion \eqref{jteh} holds
      since if $y\in U_{k+1,j}$
      then
\[
\xymatrix{
U_{kj}
\ar[d]_{\varphi_k}
 &
U_{k+1,j}
\ar@{->}_{{A}_k}[l]
 \ar[d]_{\varphi_{k+1}}
 &&
{A}_ky
\ar@{|->}[d]
  &
y\ar@{|->}[d]
\ar@{|->}[l]
           \\
V_{k}
 &
V_{k+1}\ar@{->}_{{B}_k}[l]
 &&
\varphi_k{A}_ky
  &
\varphi_{k+1}y
\ar@{|->}[l]
 }
\]
Thus, $ \varphi_{k}{A}_ky\in V_{kj}$
and so $\varphi_{k+1}y\in V_{k+1,j}$.
\end{proof}
\bigskip

The authors wish to express their thanks to the referee for several helpful comments.

\end{document}